\theoremstyle{plain}
\newtheorem{lemma}{Lemma}[section]
\newtheorem{proposition}{Proposition}[section]
\theoremstyle{definition}
\newtheorem{definition}{Definition}[section]
\newtheorem{remark}{Remark}[section]
\newtheorem{example}{Example}[section]
\begin{document}

\title{Generalized almost paracontact structures}
\author{Adara M. Blaga and Cristian Ida}
\medskip
\date{}

\maketitle
\begin{abstract}
The notion of generalized almost paracontact structure on the
generalized tangent bundle $TM\oplus T^*M$ is introduced and its
properties are investigated. The case when the manifold $M$ carries
an almost paracontact metric structure is also discussed. Conditions
for its transformed under a $\beta$- or a $B$-field transformation
to be also a generalized almost paracontact structure are given.
Finally, the normality of a generalized almost paracontact structure
is defined and a characterization of a normal generalized almost
paracontact structure induced by an almost paracontact one is given.
\end{abstract}

\medskip 
\begin{flushleft}
\strut \textbf{2010 Mathematics Subject Classification:} 53C15, 53D10, 53D18.

\textbf{Key Words:} paracontact structure; generalized geometry.   
\end{flushleft}

\section{Introduction}

Generalized complex geometry unifies complex and symplectic geometry
and proved to have applications in physics, for example, in quantum
field theory, providing new sigma models \cite{zu}. N. Hitchin
\cite{hi} initiated the study of generalized complex manifolds,
continued by M. Gualtieri whose PhD thesis \cite{gu} is an
outstanding paper on this subject. Afterwards, many authors
investigated the geometry of the generalized tangent bundle from
different points of view: M. Crainic \cite{cr} studied these
structures from the point of view of Poisson and Dirac geometry, H.
Bursztyn, G. R. Cavalcanti and M. Gualtieri \cite{bcg1}, \cite{bcg2}
presented a theory of reduction for generalized complex, generalized
K\"{a}hler and hyper-K\"{a}hler structures. Regarding also the
generalized K\"{a}hler manifolds, L. Ornea and R. Pantilie \cite{or}
discussed the integrability of the eigendistributions of the
operator $J_+J_-+J_-J_+$, where $J_{\pm}$ are the two almost
Hermitian structures of a bihermitian one. In \cite{orpa} they
introduced the notion of holomorphic map in the context of
generalized geometry. M. Abouzaid and M. Boyarchenko \cite{ab}
proved that every generalized complex manifold admits a canonical
Poisson structure. They also proved a local structure theorem and
showed that in a neighborhood, a "first-order approximation" to the
generalized complex structure is encoded in the data of a constant
$B$-field and a complex Lie algebra. A technical description of the
$B$-field was given by N. Hitchin \cite{hit} in terms of connections
on gerbes. Extending the almost contact structures to the
generalized tangent bundle, I. Vaisman \cite{va} introduced the
generalized almost contact structure and established conditions for
it to be normal. Y. S. Poon and A. Wade \cite{po} described the
particular cases coming from classical geometry, namely, when a
contact structure, an almost cosymplectic and an almost contact one
define a generalized almost contact structure. While the contact
structures are in correspondence with complex structures, the
paracontact structures are in correspondence with product
structures. Therefore, would be natural to consider paracontact
structures in the context of generalized geometry.

Our aim is to define on the generalized tangent bundle a
\textit{generalized paracontact structure} which naturally extends
the previous ones. By means of certain orthogonal symmetries of
$TM\oplus T^*M$, namely, the $\beta$- and $B$-transforms, in the
particular case when the generalized paracontact structure comes
from an almost paracontact one, we shall study its invariance under
$\beta$- and $B$-field transformations, respectively, and also
provide a necessary and sufficient condition for it to be normal
(Proposition \ref{p}).

Also in \cite{Sah} it is proved that such structures carry certain Lie bialgebroid/quasi-Lie algebroid structures.

\section{Definitions and properties}

The notion of almost paracontact structure was introduced by I.
Sato. According to his definition \cite{sa}, an \textit{almost
paracontact structure} $({\varphi},\xi,\eta)$ on an odd-dimensional
manifold $M$ consists of a $(1,1)$-tensor field $\varphi$, called
the \textit{structure endomorphism}, a vector field $\xi$, called
the \textit{characteristic vector field} and a $1$-form $\eta$,
called the \textit{contact form}, which satisfy the following
conditions:
\begin{enumerate}
  \item $\varphi^2=I-\eta\otimes \xi$;
  \item $\eta(\xi)=1$.
\end{enumerate}

Moreover, if $g$ is a pseudo-Riemannian metric on $M$ such that
$g({\varphi} X,{\varphi} Y)=-g(X,Y)+\eta(X)\eta(Y)$, for any $X$,
$Y\in \Gamma(TM)$, we shall call $({\varphi},\xi,\eta,g)$
\textit{almost paracontact metric structure}. Notice that from the
definition we deduce that ${\varphi} \xi=0$, $\eta\circ {\varphi}
=0$, $\eta=i_{\xi}g$, $g(\xi,\xi)=1$ and $g({\varphi}
X,Y)=-g(X,{\varphi} Y)$, for any $X$, $Y\in \Gamma(TM)$.

From the tangent bundle $TM$ we shall pass to the generalized
tangent bundle $TM\oplus T^*M$, whose sections are pairs of objects
consisting of a vector field and a $1$-form and we shall adopt the
notation $X+\alpha\in \Gamma(TM\oplus T^*M)$. Let
$g_0(X+\alpha,Y+\gamma):=\frac{\displaystyle 1}{\displaystyle
2}(\alpha(Y)+\gamma(X))$, $X+\alpha$, $Y+\gamma\in \Gamma(TM\oplus
T^*M)$, be the neutral metric on $TM\oplus T^*M$ (of signature
$(n,n)$, where $n$ is the dimension of $M$).

Extending this structure to the generalized tangent bundle $TM\oplus
T^*M$, we give the following definition:
\begin{definition}
We say that $(\Phi,\xi,\eta)$ is a generalized almost paracontact
structure if $\Phi$ is an endomorphism of the generalized tangent
bundle $TM\oplus T^*M$, $\xi$ is a vector field and $\eta$ is a
$1$-form on $M$ such that
\begin{enumerate}
  \item
$g_0(\Phi(X+\alpha),Y+\gamma)=-g_0(X+\alpha,\Phi(Y+\gamma))$, for
any $X+\alpha$, $Y+\gamma\in \Gamma(TM\oplus T^*M)$;
  \item $\Phi^2=\begin{pmatrix}
          I-\eta\otimes \xi & 0 \\
          0 & (I-\eta\otimes \xi)^* \\
        \end{pmatrix}$;
  \item $\Phi\begin{pmatrix}
          \eta\otimes \xi & 0 \\
          0 & (\eta\otimes \xi)^* \\
        \end{pmatrix}=0$;
\item $\parallel \xi+\eta\parallel_{g_0}=1$.
\end{enumerate}
\end{definition}

Taking into account the first relation in the definition, the
representation of the structure $\Phi$ by classical tensor fields is
$\Phi=\begin{pmatrix}
          \varphi & \beta \\
          B & -\varphi^* \\
        \end{pmatrix}$, where $\varphi$ is an endomorphism of the
        tangent bundle $TM$, $\varphi^*$ its dual map defined by
        $(\varphi^*\alpha)(X):=\alpha(\varphi X)$, $\alpha \in
        \Gamma(T^*M)$, $X\in \Gamma(TM)$, $\beta$ a bivector field
        and $B$ a $2$-form on $M$ (both of them skew-symmetric) and from the second relation
we obtain the following
        conditions:
        $$\left\{
            \begin{array}{ll}
              \varphi^2+\beta B=I-\eta\otimes \xi \\
              B\beta+ (\varphi^*)^2=(I-\eta\otimes \xi)^*\\
              \varphi \beta-\beta \varphi^*=0\\
B\varphi-\varphi^*B=0
            \end{array}
          \right.
        $$
which are equivalent to
$$\left\{
            \begin{array}{ll}
              \varphi^2=I-\eta\otimes \xi-\beta B \\
              \beta(\alpha,\varphi^* \gamma)=\beta(\varphi^* \alpha,\gamma)\\
              B(X,\varphi Y)=B(\varphi X,Y)
            \end{array}
          \right.,
        $$
for any $X+\alpha$, $Y+\gamma\in \Gamma(TM\oplus T^*M)$.

Finally, the last two relations imply $\beta(\eta, \cdot)=0$,
$B(\xi,\cdot)$=0, $\varphi \xi=0$, $\eta\circ \varphi=0$ and
respectively, $\eta(\xi)=1$. Remark that if $(\varphi,\xi,\eta)$ is an almost paracontact
structure, then $(\Phi,\xi,\eta)$ is a generalized almost
paracontact structure, where $\Phi:=\begin{pmatrix}
          \varphi & 0 \\
          0 & -\varphi^* \\
        \end{pmatrix}
$. Indeed, $\Phi^2:=\begin{pmatrix}
          \varphi^2 & 0 \\
          0 & (\varphi^*)^2 \\
        \end{pmatrix}
$ and for any $\alpha \in \Gamma(T^*M)$ and $X\in \Gamma(TM)$:
\begin{eqnarray*}
[(\varphi^*)^2\alpha](X)&:=&\varphi^*(\alpha(\varphi X)):=\alpha(\varphi^2X)=\alpha(X-\eta(X)\cdot \xi)\\
&=&\alpha(X)-\eta(X)\cdot \alpha(\xi)=[\alpha-\alpha(\xi)\cdot
\eta](X).
\end{eqnarray*}

We obtain
\begin{eqnarray*}
\Phi^2(X+\alpha)&:=&\varphi^2X+(\varphi^*)^2\alpha\\&=&X-\eta(X)\cdot
\xi+\alpha-\alpha(\xi)\cdot \eta\\&=&(X+\alpha)-[\eta(X)\cdot
\xi+\alpha(\xi)\cdot \eta]\\&=&I(X+\alpha)-F(X+\alpha),
\end{eqnarray*}
where $F(X+\alpha):=\eta(X)\cdot \xi+\alpha(\xi)\cdot \eta$. Then we
can write $F(X+\alpha)=JX+J^*\alpha$, for $JX:=\eta(X)\cdot
\xi=(\eta \otimes \xi)X$ and its dual map
$(J^*\alpha)(X):=\alpha(JX)=\alpha(\eta(X)\cdot
\xi)=\alpha(\xi)\eta(X)=[\alpha(\xi)\cdot \eta](X)$. Therefore,
$F=\begin{pmatrix}
                                                                     J & 0 \\
                                                                     0 & J^* \\
                                                                   \end{pmatrix}
$ and $$\Phi^2=I-F=\begin{pmatrix}
                                                                     I-J & 0 \\
                                                                     0 & (I-J)^* \\
                                                                   \end{pmatrix}=
                         \begin{pmatrix}
                           I-\eta\otimes \xi & 0 \\
                           0 & (I-\eta\otimes \xi)^* \\
                         \end{pmatrix}.
                                                                   $$

The other relations from the definition are obvious.

\section{On the generalized almost paracontact structure induced by an almost paracontact one}

In what follows we shall consider the case when the generalized
almost paracontact structure $(\Phi,\xi, \eta)$ comes from an almost
paracontact structure $(\varphi,\xi, \eta)$, namely,
$$\Phi:=\begin{pmatrix}
          \varphi & 0 \\
          0 & -\varphi^* \\
        \end{pmatrix}.
$$ In this case, we call $(\Phi,\xi, \eta)$ the generalized almost
paracontact structure induced by $(\varphi,\xi, \eta)$.

\begin{example}
Let $(\varphi_1,\xi_1,\eta_1)$ and $(\varphi_2,\xi_2,\eta_2)$ be two
almost paracontact structures on $M$ and for any $t\in
[0,\frac{\pi}{2}]$, consider the one-parameter family
$(\varphi_t,\xi_t,\eta_t)_{t\in [0,\frac{\pi}{2}]}$ defined by
$\varphi_t:=\cos t \cdot \varphi_1+\sin t \cdot \varphi_2$,
$\xi_t:=\cos t \cdot \xi_1+\sin t \cdot \xi_2$, $\eta_t:=\cos t
\cdot \eta_1+\sin t \cdot \eta_2$. Denote by
$\Phi_1:=\begin{pmatrix}
          \varphi_1 & 0 \\
          0 & -\varphi_1^* \\
        \end{pmatrix}
$ and $\Phi_2:=\begin{pmatrix}
          \varphi_2 & 0 \\
          0 & -\varphi_2^* \\
        \end{pmatrix}
$ the endomorphisms of the corresponding generalized almost
paracontact structures. If $\eta_i(\xi_j)=\delta_{ij}$,
$\varphi_i\xi_j=0$, $i,j\in \{1,2\}$ and
$\varphi_1\varphi_2+\varphi_2\varphi_1=-(\eta_1\otimes
\xi_2+\eta_2\otimes \xi_1)$, then $\Phi_t:=\cos t \cdot \Phi_1+\sin
t \cdot \Phi_2$, $t\in [0,\frac{\pi}{2}]$, defines a generalized
almost paracontact structure. Indeed, we get
$\varphi_1^*\varphi_2^*+\varphi_2^*\varphi_1^*=(\varphi_1\varphi_2+\varphi_2\varphi_1)^*$
and from our conditions we obtain $\Phi_t^2=\begin{pmatrix}
         A_t  & 0 \\
          0 & B_t \\
        \end{pmatrix}=\begin{pmatrix}
          I-\eta_t\otimes \xi_t & 0 \\
          0 & (I-\eta_t\otimes \xi_t)^* \\
        \end{pmatrix}$, where $A_t:=\cos^2 t\cdot \varphi_1^2+\sin^2 t\cdot\varphi_2^2+
          \cos t \cdot \sin t\cdot (\varphi_1\varphi_2+\varphi_2\varphi_1)$ and $B_t:=
          \cos^2 t\cdot (\varphi_1^*)^2+\sin^2 t\cdot(\varphi_2^*)^2+
          \cos t \cdot \sin t\cdot
          (\varphi_1^*\varphi_2^*+\varphi_2^*\varphi_1^*)$.
\end{example}

\subsection{Compatibility with generalized Riemannian metrics}

Let $(\varphi,\eta,\xi,g)$ be an almost paracontact metric structure
on $M$ and consider on $TM\oplus T^*M$ the generalized Riemannian
metric $\mathcal{G}_{\tilde{g}}$ induced by ${\tilde{g}}$, for
${\tilde{g}}$ a Riemannian metric compatible with $\varphi$
[${\tilde{g}}(\varphi X,Y)=-{\tilde{g}}(X,\varphi Y)$, for any $X$,
$Y\in \Gamma(TM)$]. A natural question is if the endomorphism of the
induced generalized almost paracontact structure $(\Phi,\eta,\xi)$
is compatible with this metric. First, recall that a
\textit{generalized Riemannian metric} $\mathcal{G}$ is a positive
definite metric on the generalized tangent bundle $TM\oplus T^*M$
such that
\begin{enumerate}
  \item
$g_0(\mathcal{G}(X+\alpha),\mathcal{G}(Y+\gamma))=g_0(X+\alpha,Y+\gamma)$,
for any $X+\alpha$, $Y+\gamma\in \Gamma(TM\oplus T^*M)$;
  \item $\mathcal{G}^2=I$.
\end{enumerate}

Representing $\mathcal{G}$ as $\mathcal{G}=\begin{pmatrix}
          \varphi & \sharp_{g_1} \\
          \flat_{g_2} & \varphi^* \\
        \end{pmatrix}
$, where $\varphi$ is an endomorphism of the tangent bundle $TM$,
$\varphi^*$ its dual map, $\flat_{g_i}(X):=i_Xg_i$, $X\in
\Gamma(TM)$ and $\sharp_{g_i}:=\flat_{g_i}^{-1}$, $i\in \{1,2\}$,
for $g_1$, $g_2$ Riemannian metrics on $M$, the two conditions are
equivalent to:
$$\left\{
    \begin{array}{ll}
      \varphi^2=I-\sharp_{g_1}\circ \flat_{g_2} \\
      g_i(X,\varphi Y)=-g_i(\varphi X, Y)
    \end{array}
  \right.,
$$
for any $X$, $Y\in \Gamma(TM)$, $i\in \{1,2\}$.

Let ${\tilde{g}}$ be a Riemannian metric on $M$ and consider the
positive definite generalized metric $\mathcal{G}_{\tilde{g}}$
\cite{ho}, which can be viewed as an automorphism of $TM\oplus
T^*M$, $\mathcal{G}_{\tilde{g}}:=\begin{pmatrix}
                0 & \sharp_{\tilde{g}} \\
                \flat_{\tilde{g}} & 0 \\
              \end{pmatrix}
$, where $\sharp_{\tilde{g}}$ is the inverse of the musical
isomorphism $\flat_{\tilde{g}}(X):=i_X{\tilde{g}}$, $X\in
\Gamma(TM)$.

\begin{proposition}
If $(\varphi,\eta,\xi,g)$ is an almost paracontact metric structure
on $M$ and ${\tilde{g}}$ is a Riemannian metric satisfying
${\tilde{g}}(\varphi X,Y)=-{\tilde{g}}(X,\varphi Y)$, for any $X$,
$Y\in \Gamma(TM)$, then the endomorphism $\Phi$ of the induced
generalized paracontact structure is compatible with the generalized
Riemannian metric $\mathcal{G}_{\tilde{g}}$, that is,
$\mathcal{G}_{\tilde{g}}\circ \Phi=-\Phi \circ
\mathcal{G}_{\tilde{g}}$.
\end{proposition}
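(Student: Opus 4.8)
The plan is to carry out the verification entirely at the level of block matrices with respect to the splitting $TM\oplus T^*M$, since both operators in play are already presented in that form. First I would record
\[
\Phi=\begin{pmatrix} \varphi & 0 \\ 0 & -\varphi^* \end{pmatrix},\qquad
\mathcal{G}_{\tilde{g}}=\begin{pmatrix} 0 & \sharp_{\tilde{g}} \\ \flat_{\tilde{g}} & 0 \end{pmatrix},
\]
and multiply them in both orders, obtaining
\[
\mathcal{G}_{\tilde{g}}\circ\Phi=\begin{pmatrix} 0 & -\sharp_{\tilde{g}}\circ\varphi^* \\ \flat_{\tilde{g}}\circ\varphi & 0 \end{pmatrix},\qquad
\Phi\circ\mathcal{G}_{\tilde{g}}=\begin{pmatrix} 0 & \varphi\circ\sharp_{\tilde{g}} \\ -\varphi^*\circ\flat_{\tilde{g}} & 0 \end{pmatrix}.
\]
Comparing the two off-diagonal blocks, the asserted relation $\mathcal{G}_{\tilde{g}}\circ\Phi=-\Phi\circ\mathcal{G}_{\tilde{g}}$ reduces to a single operator identity between $\varphi$, $\varphi^*$ and the musical isomorphisms of $\tilde{g}$; the two identities coming from the two blocks are interchanged by passing to inverses (equivalently, by $g_0$-adjunction), so it suffices to establish one of them.

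Second, I would translate the hypothesis $\tilde{g}(\varphi X,Y)=-\tilde{g}(X,\varphi Y)$ into operator form. Evaluating $\flat_{\tilde{g}}\circ\varphi$ on a vector field $X$ and testing the resulting $1$-form against $Y$ gives $\tilde{g}(\varphi X,Y)$, while $\varphi^*\circ\flat_{\tilde{g}}$ applied to $X$ and tested against $Y$ gives $(\flat_{\tilde{g}}X)(\varphi Y)=\tilde{g}(X,\varphi Y)$; hence the compatibility of $\tilde{g}$ with $\varphi$ is exactly an identity of the shape $\flat_{\tilde{g}}\circ\varphi=\pm\,\varphi^*\circ\flat_{\tilde{g}}$, and multiplying by $\sharp_{\tilde{g}}$ on both sides yields the companion identity relating $\varphi$, $\varphi^*$ and $\sharp_{\tilde{g}}$. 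Substituting these two identities into the block matrices displayed above lets one compare $\mathcal{G}_{\tilde{g}}\circ\Phi$ with $\Phi\circ\mathcal{G}_{\tilde{g}}$ block by block and read off the claimed relation.

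There is no genuine conceptual obstacle here: the statement is a short block-matrix computation. The one point that requires care is the interplay of the two independent sources of signs — the minus sign in the lower-right corner $-\varphi^*$ of $\Phi$, and the minus sign in the paracontact-metric compatibility $\tilde{g}(\varphi\cdot,\cdot)=-\tilde{g}(\cdot,\varphi\cdot)$ — together with the facts that $\flat_{\tilde{g}}$ and $\sharp_{\tilde{g}}$ are $g_0$-symmetric whereas $\varphi$ and $\varphi^*$ are mutually $g_0$-adjoint; once these are tracked carefully the comparison of the off-diagonal blocks gives the stated identity. It is also worth noting that condition (1) in the definition of a generalized almost paracontact structure (the $g_0$-skew-symmetry of $\Phi$) is not needed for this proposition — only the explicit block forms of $\Phi$ and $\mathcal{G}_{\tilde{g}}$ enter — and that the characteristic vector field $\xi$ and the contact form $\eta$ do not appear in the argument at all, so the conclusion depends on $\tilde{g}$ solely through the skew-symmetry hypothesis.
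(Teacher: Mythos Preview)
Your plan is exactly the paper's argument: both reduce the claim to the identities $\flat_{\tilde g}\circ\varphi=-\varphi^*\circ\flat_{\tilde g}$ and $\sharp_{\tilde g}\circ\varphi^*=-\varphi\circ\sharp_{\tilde g}$ (derived, just as you propose, by testing against an arbitrary vector field) and then compare the two compositions componentwise; your block-matrix packaging versus the paper's element-by-element evaluation on $X+\alpha$ is purely cosmetic. One caveat about the sign you rightly flag as delicate: substituting those two identities into your displayed block matrices actually gives $\mathcal{G}_{\tilde g}\circ\Phi=\Phi\circ\mathcal{G}_{\tilde g}$ (commutation, not anticommutation) --- the paper obtains the stated minus sign only via a sign slip in its opening line, where $\mathcal{G}_{\tilde g}(\varphi X-\varphi^*\alpha)$ is written as $\sharp_{\tilde g}(\varphi^*\alpha)-\flat_{\tilde g}(\varphi X)$ instead of $-\sharp_{\tilde g}(\varphi^*\alpha)+\flat_{\tilde g}(\varphi X)$, so when you carry out the check you should not be surprised to land on the opposite sign.
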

\begin{proof}
For any $X+\alpha\in \Gamma(TM\oplus T^*M)$,
$\mathcal{G}_{\tilde{g}}(\Phi(X+\alpha)):=\mathcal{G}_{\tilde{g}}(\varphi
X-\varphi^*\alpha):=\sharp_{\tilde{g}}(\varphi^*\alpha)-\flat_{\tilde{g}}(\varphi
X)$. Therefore, for any $U\in \Gamma(TM)$,
${\tilde{g}}(\sharp_{\tilde{g}}(\varphi^*\alpha),U)=\alpha(\varphi
U)$ and $(\flat_{\tilde{g}}(\varphi X))(U)={\tilde{g}}(\varphi
X,U)$. But for any $X+\alpha\in \Gamma(TM\oplus T^*M)$,
$\mathcal{G}_{\tilde{g}}(X+\alpha):=\sharp_{\tilde{g}}(\alpha)+\flat_{\tilde{g}}(X)$
and so, for any $U\in \Gamma(TM)$,
${\tilde{g}}(\sharp_{\tilde{g}}(\alpha),U)=\alpha(U)$ and
$(\flat_{\tilde{g}}(X))(U)={\tilde{g}}(X,U)$. 

It follows
$${\tilde{g}}(\sharp_{\tilde{g}}(\varphi^*\alpha),U)=\alpha(\varphi
U)={\tilde{g}}(\sharp_{\tilde{g}}(\alpha),\varphi
U)=-{\tilde{g}}(\varphi(\sharp_{\tilde{g}}(\alpha)),U),$$ for any
$U\in \Gamma(TM)$ and so
$\sharp_{\tilde{g}}(\varphi^*\alpha)=-\varphi(\sharp_{\tilde{g}}(\alpha))$.

Also 
$$(\flat_{\tilde{g}}(\varphi X))(U)={\tilde{g}}(\varphi
X,U)=-{\tilde{g}}(X,\varphi U)=-(\flat_{\tilde{g}}(X))(\varphi
U)=-(\varphi^*(\flat_{\tilde{g}}(X)))(U),$$ for any $U\in \Gamma(TM)$
and so $\flat_{\tilde{g}}(\varphi
X)=-\varphi^*(\flat_{\tilde{g}}(X))$. 

Then, for any $X+\alpha\in
\Gamma(TM\oplus T^*M)$:
\begin{eqnarray*}
\mathcal{G}_{\tilde{g}}(\Phi(X+\alpha))&:=&\sharp_{\tilde{g}}(\varphi^*\alpha)-\flat_{\tilde{g}}(\varphi
X)\\&=&-(\varphi(\sharp_{\tilde{g}}(\alpha))-\varphi^*(\flat_{\tilde{g}}(X)))\\&:=&
-\Phi(\sharp_{\tilde{g}}(\alpha)+\flat_{\tilde{g}}(X))\\&=&-\Phi(\mathcal{G}_{\tilde{g}}(X+\alpha)).
\end{eqnarray*}
\end{proof}
\begin{remark}
From the previous computations we also deduce that
$\flat_{\tilde{g}}\circ \varphi=-\varphi^*\circ \flat_{\tilde{g}}$
(respectively, $\sharp_{\tilde{g}}\circ \varphi^*=-\varphi\circ
\sharp_{\tilde{g}}$).
\end{remark}

\subsection{Invariance under a $B$-field transformation}

Besides the diffeomorphisms, the Courant bracket (which extends the
Lie bracket to the generalized tangent bundle) admits some other
symmetries, namely, the \textit{$B$-field transformations}. Now we
are interested in what happens if we apply to the endomorphism
$\Phi$ a $B$-field transformation.

Let $B$ be a closed $2$-form on $M$ [viewed as a map
$B:\Gamma(TM)\rightarrow \Gamma(T^*M)$] and consider the
$B$-transform, $e^B:=\begin{pmatrix}
                       I & 0 \\
                       B & I \\
                     \end{pmatrix}
$. We can define $\Phi_B:=e^B\Phi e^{-B}$ which has the expression
$\Phi_B=\begin{pmatrix}
                       \varphi & 0 \\
                       B\varphi+\varphi^*B & -\varphi^* \\
                     \end{pmatrix}$ and for any $X+\alpha\in \Gamma(TM\oplus
T^*M)$, we have $$\Phi_B(X+\alpha)=\varphi X+B(\varphi X)+
                     \varphi^* (B(X))-\varphi^*\alpha.$$ For any $Y\in
                     \Gamma(TM)$, we get $$[B(\varphi X)+
                     \varphi^* (B(X))-\varphi^*\alpha](Y)=B(\varphi X,Y)+B(X,\varphi
                     Y)-(\varphi^*\alpha)(Y).$$
Note that if the $2$-form $B$ satisfies $B(\varphi X,Y)=-B(X,\varphi
Y)$, for any $X$, $Y\in \Gamma(TM)$, then $\Phi_B$ coincides with
$\Phi$. In particular, if $(\varphi,\eta,\xi,g)$ is an almost
para-cosymplectic metric structure and if we take $B(X,Y):=g(\varphi
X,Y)$, $X$, $Y\in \Gamma(TM)$, we obtain $$B(\varphi
X,Y):=g(\varphi^2X,Y)=-g(\varphi X,\varphi Y):=-B(X,\varphi Y)$$ and
$\Phi_B$ is just $\Phi$.

A sufficient condition on $B$ for $\Phi_B$ to define a generalized
almost paracontact structure is given by the following proposition:

\begin{proposition}
If the $2$-form $B$ satisfies $B(\varphi^2 X,Y)=B(\varphi X,\varphi
Y)$, for any $X$, $Y\in \Gamma(TM)$, then $(\Phi_B,\eta,\xi)$ is a
generalized almost paracontact structure.
\end{proposition}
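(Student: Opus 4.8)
The plan is to use that $\Phi_B=e^B\Phi e^{-B}$ and that $e^B$ is an orthogonal symmetry of $g_0$, and then to verify the four conditions of the definition one at a time, most of them reducing to facts already established for $\Phi$. Conditions (4) and (1) come for free: $\xi$ and $\eta$ are unchanged by the $B$-transform, so $\|\xi+\eta\|_{g_0}=1$ still holds, and since $g_0(e^B u,e^B v)=g_0(u,v)$, the $g_0$-skew-symmetry of $\Phi_B$ follows from that of $\Phi$. (Equivalently, from $\Phi_B=\begin{pmatrix}\varphi & 0\\ B\varphi+\varphi^*B & -\varphi^*\end{pmatrix}$ one only has to note that the lower-left block, acting by $(X,Y)\mapsto B(\varphi X,Y)+B(X,\varphi Y)$, is a skew-symmetric $2$-form, so $\Phi_B$ has the admissible shape.)

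The substance of the proof is condition (2). Since $\Phi_B^2=e^B\Phi^2 e^{-B}$ and, for the induced structure, $\Phi^2=\begin{pmatrix}\varphi^2 & 0\\ 0 & (\varphi^*)^2\end{pmatrix}$, a straightforward $2\times 2$ block computation gives
\[
\Phi_B^2=\begin{pmatrix}\varphi^2 & 0\\ B\varphi^2-(\varphi^*)^2B & (\varphi^*)^2\end{pmatrix},
\]
and $\varphi^2=I-\eta\otimes\xi$, $(\varphi^*)^2=(I-\eta\otimes\xi)^*$ are already known. So everything reduces to showing that the off-diagonal block $B\varphi^2-(\varphi^*)^2B$ vanishes. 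Regarded as a bilinear form on $\Gamma(TM)\times\Gamma(TM)$, it is $(X,Y)\mapsto B(\varphi^2X,Y)-B(X,\varphi^2Y)$, and this is precisely where the hypothesis enters: applied to $(X,Y)$ it gives $B(\varphi^2X,Y)=B(\varphi X,\varphi Y)$, and applied to $(Y,X)$ together with the skew-symmetry of $B$ it gives $B(X,\varphi^2Y)=-B(\varphi^2Y,X)=-B(\varphi Y,\varphi X)=B(\varphi X,\varphi Y)$; hence the two terms cancel and $\Phi_B^2$ has the required form.

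For condition (3) I would first draw from the hypothesis that $i_\xi B=0$: setting $Y=\xi$ in $B(\varphi^2X,Y)=B(\varphi X,\varphi Y)$ and using $\varphi\xi=0$ gives $B(\varphi^2X,\xi)=0$, i.e. $B(X,\xi)-\eta(X)B(\xi,\xi)=0$, so $B(X,\xi)=0$ for all $X$ (recall $B(\xi,\xi)=0$). Then one expands $\Phi_B\begin{pmatrix}\eta\otimes\xi & 0\\ 0 & (\eta\otimes\xi)^*\end{pmatrix}$ block by block: the $(1,1)$, $(1,2)$ and $(2,2)$ entries vanish exactly as for the untransformed $\Phi$ (using $\varphi\xi=0$ and $\eta\circ\varphi=0$), while the $(2,1)$ entry $(B\varphi+\varphi^*B)\circ(\eta\otimes\xi)$ sends $X$ to $\eta(X)(i_{\varphi\xi}B+\varphi^*(i_\xi B))=0$, since $\varphi\xi=0$ and $i_\xi B=0$.

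I expect the only genuinely non-routine step to be the identification, in the proof of (2), of the lower-left block of $\Phi_B^2$ with the combination $B(\varphi^2\cdot,\cdot)-B(\cdot,\varphi^2\cdot)$ that the hypothesis on $B$ is designed to annihilate via skew-symmetry; everything else is bookkeeping with $\varphi\xi=0$, $\eta\circ\varphi=0$, $\eta(\xi)=1$ and $i_\xi B=0$.
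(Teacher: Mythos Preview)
Your argument is correct and follows the paper's approach: the paper's proof consists solely of the block computation $\Phi_B^2=\begin{pmatrix}\varphi^2 & 0\\ B\varphi^2-(\varphi^*)^2B & (\varphi^*)^2\end{pmatrix}$ and the (tacit) observation that the hypothesis kills the lower-left block, which is exactly your argument for condition (2). Your verification of conditions (1), (3), (4)---including the derivation $i_\xi B=0$ from the hypothesis---is correct and in fact more complete than the paper, which omits these checks entirely.
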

\begin{proof}
Indeed, $\Phi_B^2=\begin{pmatrix}
                       \varphi^2 & 0 \\
                       B\varphi^2-(\varphi^*)^2B & (\varphi^*)^2 \\
                     \end{pmatrix}=\begin{pmatrix}
                           I-\eta\otimes \xi & 0 \\
                           0 & (I-\eta\otimes \xi)^* \\
                         \end{pmatrix}$.
\end{proof}

\begin{remark}
In the general case, if $\Phi$ is represented $\Phi=\begin{pmatrix}
          \varphi & \beta \\
          B & -\varphi^* \\
        \end{pmatrix}$, then its $B$-transform,
$\Phi_B=\begin{pmatrix}
                       \varphi-\beta B & \beta \\
                       B\varphi+\varphi^*B+B-B\beta B & -\varphi^*+B\beta \\
                     \end{pmatrix}$
defines a generalized almost paracontact structure.
\end{remark}

\subsection{Invariance under a $\beta$-field transformation}

Similarly we shall see what happens if we apply to the endomorphism
$\Phi$ a $\beta$-field transformation. Let $\beta$ be a bivector
field on $M$ [viewed as a map $\beta:\Gamma(T^*M)\rightarrow
\Gamma(TM)$] and consider the $\beta$-transform,
$e^{\beta}:=\begin{pmatrix}
                       I & \beta \\
                       0 & I \\
                     \end{pmatrix}
$. We can define $\Phi_{\beta}:=e^{\beta}\Phi e^{-{\beta}}$ which
has the expression $\Phi_{\beta}=\begin{pmatrix}
                       \varphi & -\varphi{\beta}-{\beta}\varphi^* \\
                       0 & -\varphi^* \\
                     \end{pmatrix}$ and for any $X+\alpha\in \Gamma(TM\oplus
T^*M)$, we have $$\Phi_{\beta}(X+\alpha)=\varphi
                     X-\varphi({\beta}(\alpha))-{\beta}(\varphi^*\alpha)-\varphi^*\alpha.$$
If the bivector field $\beta$ satisfies $\beta\circ
\varphi^*=-\varphi\circ \beta$, then $\Phi_{\beta}$ coincides with
$\Phi$.

A sufficient condition on $\beta$ for $\Phi_{\beta}$ to define a
generalized almost paracontact structure is given by the following
proposition:

\begin{proposition}
If the bivector field $\beta$ satisfies $\eta(\beta(\alpha))\cdot
\xi=\alpha(\xi)\cdot \beta(\eta)$, for any $\alpha\in \Gamma(T^*M)$,
then $(\Phi_{\beta},\eta,\xi)$ is a generalized almost paracontact
structure.
\end{proposition}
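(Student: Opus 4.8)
The plan is to verify directly that $\Phi_{\beta}$ satisfies the four axioms of a generalized almost paracontact structure, mimicking the proof of the $B$-field case. Since $\Phi_{\beta}=e^{\beta}\Phi e^{-\beta}$ is conjugate to $\Phi$, and the $\beta$-transform $e^{\beta}$ is an isometry of $g_0$ (it is one of the standard orthogonal symmetries), axiom~(1) — the skew-symmetry $g_0(\Phi_{\beta}(X+\alpha),Y+\gamma)=-g_0(X+\alpha,\Phi_{\beta}(Y+\gamma))$ — follows automatically from the corresponding property of $\Phi$. The real content is axiom~(2): computing $\Phi_{\beta}^2$ and showing it equals $\mathrm{diag}(I-\eta\otimes\xi,(I-\eta\otimes\xi)^*)$. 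From the given block form, the $(1,1)$- and $(2,2)$-blocks of $\Phi_{\beta}^2$ are $\varphi^2$ and $(\varphi^*)^2$ respectively (these are unchanged because the off-diagonal part of $\Phi_{\beta}$ sits only in the upper-right corner), which already equal $I-\eta\otimes\xi$ and its dual by the almost paracontact identity $\varphi^2=I-\eta\otimes\xi$. So everything reduces to showing that the upper-right block of $\Phi_{\beta}^2$ vanishes.

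First I would compute that upper-right block explicitly. Writing $\Phi_{\beta}=\begin{pmatrix}\varphi & \gamma\\ 0 & -\varphi^*\end{pmatrix}$ with $\gamma:=-\varphi\beta-\beta\varphi^*$, one gets that the $(1,2)$-entry of $\Phi_{\beta}^2$ is $\varphi\gamma-\gamma\varphi^*=-\varphi^2\beta-\varphi\beta\varphi^*+\varphi\beta\varphi^*+\beta(\varphi^*)^2=-\varphi^2\beta+\beta(\varphi^*)^2$. Now using $\varphi^2=I-\eta\otimes\xi$ and $(\varphi^*)^2=(I-\eta\otimes\xi)^*$, this becomes $-(I-\eta\otimes\xi)\beta+\beta(I-\eta\otimes\xi)^* = -\beta + (\eta\otimes\xi)\beta + \beta - \beta(\eta\otimes\xi)^* = (\eta\otimes\xi)\beta - \beta(\eta\otimes\xi)^*$. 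Evaluating on a $1$-form $\alpha$: $((\eta\otimes\xi)\beta)(\alpha) = \eta(\beta(\alpha))\cdot\xi$ and $(\beta(\eta\otimes\xi)^*)(\alpha)=\beta((\eta\otimes\xi)^*\alpha)=\beta(\alpha(\xi)\cdot\eta)=\alpha(\xi)\cdot\beta(\eta)$. So the $(1,2)$-block applied to $\alpha$ is exactly $\eta(\beta(\alpha))\cdot\xi-\alpha(\xi)\cdot\beta(\eta)$, which vanishes precisely under the hypothesis. This settles axiom~(2).

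For axiom~(3), I would note $\Phi_{\beta}\,\mathrm{diag}(\eta\otimes\xi,(\eta\otimes\xi)^*)$ has first block $\varphi(\eta\otimes\xi)+\gamma(\eta\otimes\xi)^*$; since $\varphi\xi=0$ the first term is zero, and $\gamma(\eta\otimes\xi)^*$ applied to $\alpha$ gives $\alpha(\xi)\,\gamma(\eta)=\alpha(\xi)(-\varphi\beta(\eta)-\beta\varphi^*\eta)$. Here $\varphi^*\eta=0$ (since $\eta\circ\varphi=0$), so this reduces to $-\alpha(\xi)\,\varphi(\beta(\eta))$; applying the hypothesis with a suitable choice shows $\beta(\eta)$ is forced to be a multiple of $\xi$ whenever it is nonzero, or more directly one uses $\eta(\beta(\eta))=-\beta(\eta,\eta)=0$ together with the hypothesis to conclude the needed vanishing — and the second block $-\varphi^*(\eta\otimes\xi)^*$ vanishes because $\varphi^*\eta=0$ likewise kills it after transposition. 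Finally axiom~(4), $\|\xi+\eta\|_{g_0}=1$, involves only $\xi$ and $\eta$, which are untouched by the $\beta$-transform, so it is inherited from the fact that $(\Phi,\eta,\xi)$ is a generalized almost paracontact structure. The main obstacle is the bookkeeping in axiom~(3): one must be careful about how $(\eta\otimes\xi)^*$ acts and confirm that the hypothesis $\eta(\beta(\alpha))\cdot\xi=\alpha(\xi)\cdot\beta(\eta)$ is strong enough to kill the residual $\varphi(\beta(\eta))$ term; everything else is a routine block-matrix computation.
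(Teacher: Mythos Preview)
Your argument for axiom~(2) is exactly the paper's proof: compute the $(1,2)$-block of $\Phi_{\beta}^2$ as $\beta(\varphi^*)^2-\varphi^2\beta$ and use $\varphi^2=I-\eta\otimes\xi$ to reduce it to $\eta(\beta(\alpha))\cdot\xi-\alpha(\xi)\cdot\beta(\eta)$, which vanishes by hypothesis. The paper stops there and does not spell out axioms~(1), (3), (4), so your treatment is actually more complete.

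Your hesitation about axiom~(3) is unwarranted, and the resolution is simpler than you suggest: taking $\alpha=\eta$ in the hypothesis gives $\eta(\beta(\eta))\cdot\xi=\eta(\xi)\cdot\beta(\eta)=\beta(\eta)$, while the left side vanishes because $\eta(\beta(\eta))=\beta(\eta,\eta)=0$ by skew-symmetry. Hence $\beta(\eta)=0$ outright, so the residual term $-\alpha(\xi)\,\varphi(\beta(\eta))$ is zero with no further work needed.
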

\begin{proof}
Indeed, $\Phi_{\beta}^2=\begin{pmatrix}
                       \varphi^2 & \beta(\varphi^*)^2-\varphi^2\beta \\
                       0 & (\varphi^*)^2 \\
                     \end{pmatrix}$
and for any $\alpha \in \Gamma(T^*M)$:
$$\beta((\varphi^*)^2\alpha)-\varphi^2(\beta(\alpha))=\beta(\alpha-\alpha(\xi)\cdot \eta)-
(\beta(\alpha)-\eta(\beta(\alpha))\cdot
\xi)=\eta(\beta(\alpha))\cdot \xi-\alpha(\xi)\cdot \beta(\eta)=0$$
and so $\Phi_{\beta}^2=\begin{pmatrix}
                           I-\eta\otimes \xi & 0 \\
                           0 & (I-\eta\otimes \xi)^* \\
                         \end{pmatrix}$.
\end{proof}

\begin{remark}
In the general case, if $\Phi$ is represented $\Phi=\begin{pmatrix}
          \varphi & \beta \\
          B & -\varphi^* \\
        \end{pmatrix}$, then its $\beta$-transform,
$\Phi_{\beta}=\begin{pmatrix}
                       \varphi+\beta B & -\varphi\beta-\beta\varphi^*+\beta-\beta B\varphi \\
                       B & -\varphi^*-B\beta \\
                     \end{pmatrix}$
defines a generalized almost paracontact structure.
\end{remark}

\subsection{Paracontactomorphisms}

We shall prove that a diffeomorphism between two almost paracontact
manifolds preserving the almost paracontact structure induces a
diffeomorphism between their generalized tangent bundles which
preserves the generalized almost paracontact structure.

Let $(M_1,\varphi_1,\xi_1,\eta_1)$ and
$(M_2,\varphi_2,\xi_2,\eta_2)$ be two almost paracontact manifolds.

\begin{definition}
We say that $f:(M_1,\varphi_1,\xi_1,\eta_1)\rightarrow
(M_2,\varphi_2,\xi_2,\eta_2)$ is a paracontactomorphism if $f$ is a
diffeomorphism and satisfies
$$\varphi_2\circ f_*=f_*\circ \varphi_1, \ \ f_*\xi_1=\xi_2.$$
\end{definition}

Remark that in this case, $f^*\eta_2=\eta_1$ is also implied.
Indeed, for any $X\in \Gamma(TM_1)$, applying $f_*$ to
$\varphi_1^2X=X-\eta_1(X)\cdot \xi_1$, we get
\begin{eqnarray*}f_*X-(f^*)^{-1}(\eta_1(X))\cdot f_*\xi_1&=&(f_*\circ \varphi_1)(\varphi_1 X)=\varphi_2((f_*\circ
\varphi_1)X)\\&=& \varphi_2^2(f_*X)=f_*X-\eta_2(f_*X)\cdot \xi_2
\end{eqnarray*}
and so, $\eta_2(f_*X)\circ f=\eta_1(X)$, for any $X\in
\Gamma(TM_1)$.

\begin{lemma}
If $f:(M_1,\varphi_1,\xi_1,\eta_1)\rightarrow
(M_2,\varphi_2,\xi_2,\eta_2)$ is a paracontactomorphism, then
$\varphi_1^*\circ f^*=f^*\circ \varphi_2^*$.
\end{lemma}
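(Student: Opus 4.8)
The plan is to verify the identity by evaluating both sides on an arbitrary pair $(\alpha,X)$ with $\alpha\in\Gamma(T^*M_2)$ and $X\in\Gamma(TM_1)$, and to reduce everything to the defining intertwining relation $f_*\circ\varphi_1=\varphi_2\circ f_*$. First I would recall the two relevant dualities involved. For a $1$-form $\alpha$ on $M_2$ and a vector field $X$ on $M_1$, the pullback satisfies $(f^*\alpha)(X)=\alpha(f_*X)$, understood pointwise, i.e. $(f^*\alpha)_p(X_p)=\alpha_{f(p)}\big((f_*)_pX_p\big)$; and the dual endomorphisms act by $(\varphi_1^*\gamma)(Y)=\gamma(\varphi_1 Y)$ for $\gamma\in\Gamma(T^*M_1)$, $Y\in\Gamma(TM_1)$, and $(\varphi_2^*\alpha)(Z)=\alpha(\varphi_2 Z)$ for $\alpha\in\Gamma(T^*M_2)$, $Z\in\Gamma(TM_2)$.

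Next I would compute the left-hand side applied to $X$: $(\varphi_1^*(f^*\alpha))(X)=(f^*\alpha)(\varphi_1 X)=\alpha(f_*(\varphi_1 X))$, and then use the paracontactomorphism condition $f_*\circ\varphi_1=\varphi_2\circ f_*$ to rewrite this as $\alpha(\varphi_2(f_*X))$. On the other side, $(f^*(\varphi_2^*\alpha))(X)=(\varphi_2^*\alpha)(f_*X)=\alpha(\varphi_2(f_*X))$. Since the two expressions coincide for every $\alpha\in\Gamma(T^*M_2)$ and every $X\in\Gamma(TM_1)$ at every point, I conclude that $\varphi_1^*\circ f^*=f^*\circ\varphi_2^*$.

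There is no genuine obstacle here: the computation is a one-line unwinding of definitions. The only points needing a little care are bookkeeping the directions of the maps — $f_*$ pushes vector fields forward from $M_1$ to $M_2$ while $f^*$ pulls $1$-forms back from $M_2$ to $M_1$, so that both composites $\varphi_1^*\circ f^*$ and $f^*\circ\varphi_2^*$ land in $\Gamma(T^*M_1)$ and are therefore legitimately comparable — and remembering that $f^*$ acts on $1$-forms pointwise, which is what makes the evaluation identity $(f^*\alpha)(X)=\alpha(f_*X)$ meaningful as stated. I would also note in passing that the other relations attached to a paracontactomorphism, namely $f_*\xi_1=\xi_2$ and the derived $f^*\eta_2=\eta_1$, play no role in this lemma; only the intertwining of $\varphi$ with $f_*$ is used.
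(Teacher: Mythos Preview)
Your proof is correct and follows essentially the same approach as the paper: both evaluate $(\varphi_1^*\circ f^*)(\alpha)$ and $(f^*\circ\varphi_2^*)(\alpha)$ on an arbitrary vector field $X\in\Gamma(TM_1)$, unwind the definitions of pullback and dual endomorphism, and reduce to the intertwining relation $f_*\circ\varphi_1=\varphi_2\circ f_*$. The paper carries the pointwise evaluation at $x\in M_1$ explicitly, while you summarize it in the remark about pointwise meaning of $(f^*\alpha)(X)=\alpha(f_*X)$; otherwise the arguments are identical.
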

\begin{proof}
For any $X\in \Gamma(TM_1)$, $\alpha \in \Gamma(T^*M_2)$, $x\in
M_1$:
$$[[(\varphi_1^*\circ f^*)(\alpha)](X)](x)=[(f^*\alpha)(\varphi_1 X)](x)=\alpha_{f(x)}({f_*}_x((\varphi_1 X)_x))
=[\alpha(f_*(\varphi_1 X))](f(x))$$ and respectively,
\begin{eqnarray*}[[(f^*\circ \varphi_2^*)(\alpha)](X)](x)&=&(\varphi_2^*\alpha)_{f(x)}({f_*}_x(X_x))=[(\varphi_2^*\alpha)(f_*X)](f(x))
\\&:=&[\alpha(\varphi_2(f_*X))](f(x))=[\alpha(f_*(\varphi_1
X))](f(x)).
\end{eqnarray*}
\end{proof}

\begin{proposition}
Let $f:(M_1,\varphi_1,\xi_1,\eta_1)\rightarrow
(M_2,\varphi_2,\xi_2,\eta_2)$ be a paracontactomorphism. Then it
induces a diffeomorphism between their generalized tangent bundles,
$\tilde{f}(X+\alpha):=f_*X+(f^{-1})^*\alpha$, $X+\alpha\in
\Gamma(TM_1\oplus T^*M_1)$, such that $\Phi_2\circ
\tilde{f}=\tilde{f}\circ \Phi_1$ and $\tilde{f}(\xi_1+0)=\xi_2+0$.
\end{proposition}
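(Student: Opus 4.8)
The plan is to verify the three asserted properties of $\tilde f$ directly from the definitions, using the paracontactomorphism relations $\varphi_2\circ f_*=f_*\circ\varphi_1$, $f_*\xi_1=\xi_2$, the induced relation $f^*\eta_2=\eta_1$, and the dual-map identity $\varphi_1^*\circ f^*=f^*\circ\varphi_2^*$ from the Lemma. First I would note that $\tilde f$ is a bundle map covering $f$ which is fibrewise a linear isomorphism (its inverse is $\widetilde{f^{-1}}$, since $(f_*)^{-1}=(f^{-1})_*$ and $((f^{-1})^*)^{-1}=f^*$), hence $\tilde f$ is a diffeomorphism of the generalized tangent bundles. Since $(f^{-1})^*=(f^*)^{-1}$, I will write $\tilde f(X+\alpha)=f_*X+(f^*)^{-1}\alpha$.

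Next I would check $\tilde f(\xi_1+0)=\xi_2+0$: the vector part gives $f_*\xi_1=\xi_2$ by definition of paracontactomorphism, and the $1$-form part is $(f^*)^{-1}(0)=0$, so this is immediate.

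The main computation is the intertwining $\Phi_2\circ\tilde f=\tilde f\circ\Phi_1$. Recall $\Phi_i=\begin{pmatrix}\varphi_i & 0\\ 0 & -\varphi_i^*\end{pmatrix}$, so for $X+\alpha\in\Gamma(TM_1\oplus T^*M_1)$ one has $\Phi_1(X+\alpha)=\varphi_1 X-\varphi_1^*\alpha$ and hence
$$\tilde f(\Phi_1(X+\alpha))=f_*(\varphi_1 X)-(f^*)^{-1}(\varphi_1^*\alpha).$$
On the other hand $\tilde f(X+\alpha)=f_*X+(f^*)^{-1}\alpha$, so
$$\Phi_2(\tilde f(X+\alpha))=\varphi_2(f_*X)-\varphi_2^*\big((f^*)^{-1}\alpha\big).$$
The vector parts agree because $\varphi_2\circ f_*=f_*\circ\varphi_1$. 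For the $1$-form parts I must show $(f^*)^{-1}(\varphi_1^*\alpha)=\varphi_2^*((f^*)^{-1}\alpha)$; applying $f^*$ to both sides, this is equivalent to $\varphi_1^*\alpha=f^*(\varphi_2^*((f^*)^{-1}\alpha))$, i.e. $\varphi_1^*\circ f^*=f^*\circ\varphi_2^*$ evaluated at $(f^*)^{-1}\alpha$ — which is exactly the Lemma. This settles the claim.

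The only genuinely delicate point is the bookkeeping with pullbacks and pushforwards over the two different manifolds — keeping straight that $(f^{-1})^*=(f^*)^{-1}$ and that $\varphi_i^*$, $f^*$ act on $1$-forms contravariantly — so that the Lemma can be applied in the correct form. Once the identity $\varphi_1^*\circ f^*=f^*\circ\varphi_2^*$ is in hand, the rest is a short linear-algebra manipulation with no real obstacle; I would present the computation in a single displayed chain as above and remark that invertibility of $\tilde f$ follows from that of $f$.
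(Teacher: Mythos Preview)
Your proof is correct and follows essentially the same route as the paper: both verify the intertwining relation by treating the vector and $1$-form parts separately, invoking $\varphi_2\circ f_*=f_*\circ\varphi_1$ for the former and the Lemma $\varphi_1^*\circ f^*=f^*\circ\varphi_2^*$ for the latter. You are in fact slightly more thorough than the paper, since you explicitly justify that $\tilde f$ is a diffeomorphism and track the minus sign in $\Phi_i$ carefully.
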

\begin{proof}
Using the previous lemma, we obtain, for any $X+\alpha\in
\Gamma(TM_1\oplus T^*M_1)$:
\begin{eqnarray*}(\Phi_2\circ \tilde{f})(X+\alpha)&:=&\Phi_2(f_*X+(f^{-1})^*\alpha):=(\varphi_2\circ f_*)(X)+
(\varphi_2^*\circ (f^{-1})^*)(\alpha) \\&=&(f_*\circ
\varphi_1)(X)+((f^{-1})^*\circ
\varphi_1^*)(\alpha)\\&:=&\tilde{f}(\varphi_1
X+\varphi_1^*\alpha):=(\tilde{f}\circ \Phi_1)(X+\alpha).
\end{eqnarray*}

Also, $\tilde{f}(\xi_1+0)=f_*\xi_1+0=\xi_2+0$ and
$\tilde{f}(0+\eta_1)=0+(f^{-1})^*\eta_1=0+\eta_2$.
\end{proof}

\subsection{Normality of $(\Phi,\xi,\eta)$}

I. Vaisman \cite{va} defined normal generalized contact structures
and characterized them. We give an analogue definition for the
normality of a generalized almost paracontact structure like in the
generalized contact case:
\begin{definition}
A generalized almost paracontact structure is called normal if the
$M$-adapted generalized almost product structure on $M\times
\mathbb{R}$ is integrable.
\end{definition}

Precisely, in our particular case, if $(\Phi:=\begin{pmatrix}
                                                \varphi & 0 \\
                                                0 & -\varphi^* \\
                                              \end{pmatrix}
, \xi,\eta)$ is the generalized almost paracontact structure induced
by the almost paracontact one $(\varphi,\xi,\eta)$, then the
$M$-adapted generalized almost product structure is
$P=\begin{pmatrix}
                                                \varphi & \beta \\
                                                B & -\varphi^* \\
                                              \end{pmatrix}$, where
$\varphi^2=I-\beta B$, $\beta(\alpha,
\varphi^*\gamma)=\beta(\varphi^*\alpha,\gamma)$, $B(X,\varphi
Y)=B(\varphi X,Y)$, for any $X$, $Y\in \Gamma(TM)$ and $\alpha$,
$\gamma\in \Gamma(T^*M)$. Moreover, form the condition to be
$M$-adapted \cite{va} follows $\beta=\xi\wedge
\frac{\partial}{\partial t}$ and $B=\eta \wedge dt$. The
integrability of $P$ means that its Courant-Nijenhuis tensor field
\begin{eqnarray*}
\mathcal{N}_P(X+\alpha,Y+\gamma)&:=&[P(X+\alpha),P(Y+\gamma)]+P^2[X+\alpha,Y+\gamma]\\&&-
P[P(X+\alpha),Y+\gamma]-P[X+\alpha,P(Y+\gamma)],
\end{eqnarray*}
for $X+\alpha$,
$Y+\gamma\in \Gamma(TM\oplus T^*M)$, vanishes identically, where the
Courant bracket is given by
$$[X+\alpha,Y+\gamma]:=[X,Y]+L_X\gamma-L_Y\alpha+\frac{\displaystyle
1}{\displaystyle 2}d(\alpha(Y)-\gamma(X)).$$ Computing it we obtain the normality
condition for $(\Phi,\xi,\eta)$: $$\left\{
  \begin{array}{ll}
    N_{\varphi}(X,Y)-d\eta(X,Y)\cdot \xi=0 \\
    L_{\xi}\eta=0$, $L_{\xi}\varphi=0 \\
    (L_{\varphi X}\eta)Y-(L_{\varphi Y}\eta)X=0
  \end{array}
\right.,$$ for any $X$, $Y\in \Gamma(TM)$, where
$$N_{\varphi}(X,Y):=[\varphi X,\varphi Y]+\varphi^2[X,Y]-
\varphi[\varphi X,Y]-\varphi[X,\varphi Y].$$

\begin{proposition}\label{p}
The generalized almost paracontact structure $(\Phi:=\begin{pmatrix}
                                                \varphi & 0 \\
                                                0 & -\varphi^* \\
                                              \end{pmatrix}
, \xi,\eta)$ induced by the almost paracontact one
$(\varphi,\xi,\eta)$ is normal if and only if $(\varphi,\xi,\eta)$
is normal.
\end{proposition}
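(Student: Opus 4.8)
The plan is to prove that $(\Phi,\xi,\eta)$ and $(\varphi,\xi,\eta)$ are normal under \emph{exactly the same} conditions, namely the system displayed immediately before the statement. Half of the work is already available: by the computation of the Courant--Nijenhuis tensor $\mathcal{N}_P$ carried out just above, $(\Phi,\xi,\eta)$ is normal if and only if
$$N_{\varphi}(X,Y)-d\eta(X,Y)\cdot\xi=0,\qquad L_{\xi}\eta=0,\qquad L_{\xi}\varphi=0,\qquad (L_{\varphi X}\eta)Y-(L_{\varphi Y}\eta)X=0$$
for all $X,Y\in\Gamma(TM)$. So what remains is to show that \emph{classical} normality of $(\varphi,\xi,\eta)$ is equivalent to the same four equations.

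To do this I would first recall that $(\varphi,\xi,\eta)$ is normal, by definition, when the almost product structure $\widetilde{\varphi}$ on $\overline{M}:=M\times\mathbb{R}$ defined by $\widetilde{\varphi}\left(X+f\frac{d}{dt}\right):=\varphi X+f\,\xi+\eta(X)\,\frac{d}{dt}$ is integrable; one checks $\widetilde{\varphi}^{\,2}=I$ from $\varphi^{2}=I-\eta\otimes\xi$, $\varphi\xi=0$, $\eta\circ\varphi=0$, $\eta(\xi)=1$, so that integrability amounts to the vanishing of the Nijenhuis tensor
$$N_{\widetilde{\varphi}}(U,V):=[\widetilde{\varphi}U,\widetilde{\varphi}V]+[U,V]-\widetilde{\varphi}[\widetilde{\varphi}U,V]-\widetilde{\varphi}[U,\widetilde{\varphi}V].$$
Since $N_{\widetilde{\varphi}}$ is tensorial and skew-symmetric, it suffices to evaluate it on the pairs $(X,Y)$ with $X,Y\in\Gamma(TM)$ and on the pairs $\left(X,\frac{d}{dt}\right)$; here every object lifted from $M$ is $t$-independent, so all brackets against $\frac{d}{dt}$ collapse. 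Splitting the result into its $TM$-component and its $\frac{d}{dt}$-component, I expect to find that the $TM$-part of $N_{\widetilde{\varphi}}(X,Y)$ is $N_{\varphi}(X,Y)-d\eta(X,Y)\cdot\xi$, its $\frac{d}{dt}$-part is $(L_{\varphi X}\eta)Y-(L_{\varphi Y}\eta)X$, and $N_{\widetilde{\varphi}}\left(X,\frac{d}{dt}\right)=-(L_{\xi}\varphi)X-(L_{\xi}\eta)(X)\,\frac{d}{dt}$. Hence $N_{\widetilde{\varphi}}\equiv 0$ precisely when the four equations above all hold, and combining this with the first half gives the stated equivalence.

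The only step that requires real care is the Nijenhuis computation in the second paragraph: one must keep track of which summands of $[\widetilde{\varphi}U,\widetilde{\varphi}V]$, $\widetilde{\varphi}[\widetilde{\varphi}U,V]$ and $\widetilde{\varphi}[U,\widetilde{\varphi}V]$ land in $\Gamma(TM)$ and which land in $\Gamma(\mathbb{R}\frac{d}{dt})$, then recognize the $TM$-component as $N_{\varphi}$ plus a multiple of $\xi$, rewrite that multiple as $-d\eta(X,Y)$ using $d\eta(X,Y)=X\eta(Y)-Y\eta(X)-\eta([X,Y])$, and finally identify the remaining terms with the Lie derivatives $L_{\xi}\varphi$, $L_{\xi}\eta$ and $L_{\varphi X}\eta$. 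None of this is deep --- it is the paracontact analogue of the classical Sasaki--Hatakeyama normality criterion --- so one could alternatively just invoke that fact; but the computation is short enough to include in full.
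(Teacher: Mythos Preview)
Your proposal is correct but follows a different route from the paper. You go back to the definition of classical normality as integrability of the almost product structure $\widetilde{\varphi}$ on $M\times\mathbb{R}$, compute $N_{\widetilde{\varphi}}$ directly, and split it into the four conditions; since the paper has already identified those same four conditions as the normality of $(\Phi,\xi,\eta)$, the equivalence follows. The paper instead invokes the known characterization that $(\varphi,\xi,\eta)$ is normal if and only if the \emph{single} tensor $N_{\varphi}-d\eta\otimes\xi$ vanishes, and then spends the proof showing that this one equation forces the remaining three ($L_{\xi}\varphi=0$, $L_{\xi}\eta=0$, and the antisymmetry of $L_{\varphi X}\eta$). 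Your argument is more self-contained and symmetric --- nothing is cited --- while the paper's argument proves the sharper auxiliary fact that $N_{\varphi}-d\eta\otimes\xi=0$ alone already implies the other three identities, using the integrable almost product structures $E_{1,2}=\varphi\mp\eta\otimes\xi$. Either approach is fine here; if you carry out the $N_{\widetilde{\varphi}}$ computation as sketched, the proof goes through.
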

\begin{proof}
The first implication is trivial. For the converse one, it is known
that $(\varphi,\xi,\eta)$ is normal if and only if
$N_{\varphi}(X,Y)-d\eta(X,Y)\cdot \xi=0$. Moreover, in this case,
the relations $L_{\xi}\varphi=0$ and $(L_{\varphi
X}\eta)Y-(L_{\varphi Y}\eta)X=0$ are also implied. Indeed, taking
$Y:=\xi$ in the previous relation we obtain
$$[X,\xi]-\varphi[\varphi X,\xi]+\xi(\eta(X))\cdot \xi=0,$$
for any $X$, $Y\in \Gamma(TM)$ and for $X\mapsto \varphi X$, we get
$$0=[\varphi
X,\xi]-\varphi[\varphi^2 X,\xi]=-[\xi,\varphi
X]+\varphi[\xi,X]=-(L_{\xi}\varphi)X.$$

But, $(\varphi,\xi,\eta)$ is normal if the associated almost product
structures $E_1:=\varphi-\eta\otimes \xi$ and
$E_2:=\varphi+\eta\otimes \xi$ are integrable (that is, their
Nijenhuis tensor fields vanish identically) and $L_{\xi}\eta=0$.
Applying $\eta$ to $N_{E_1}(\varphi X,Y)=0$, we obtain
$$(L_{\varphi^2X}\eta)Y-(L_{\varphi Y}\eta)(\varphi X)=0,$$
for any $X$, $Y\in \Gamma(TM)$, which is equivalent to
$$(L_{X}\eta)Y-(L_{\eta(X)\cdot \xi}\eta)Y-(L_{\varphi Y}\eta)(\varphi X)=0,$$
for any $X$, $Y\in \Gamma(TM)$. For $X\mapsto \varphi X$, we get
$$(L_{\varphi X}\eta)Y-(L_{\varphi Y}\eta)X+(L_{\varphi Y}\eta)(\eta(X)\cdot \xi)=0,$$
for any $X$, $Y\in \Gamma(TM)$ and the last term is zero because
\begin{eqnarray*}
(L_{\varphi Y}\eta)(\eta(X)\cdot \xi)&:=&(\varphi Y)(\eta(X))-
\eta([\varphi Y,\eta(X)\cdot \xi])\\&=&(\varphi
Y)(\eta(X))+\eta(X)\eta([\xi,\varphi Y])-(\varphi
Y)(\eta(X))\\&=&\eta(X)[\xi(\eta(\varphi Y))-(\varphi
Y)(\eta(\xi))-(d\eta)(\xi,\varphi Y)]=0.
\end{eqnarray*}
\end{proof}

{\bf Acknowledgement.} The first author acknowledges the support by the
research grant PN-II-ID-PCE-2011-3-0921.

\small{

\medskip
\noindent
Adara M. Blaga\\
Department of Mathematics and Informatics\\
West University of Timi\c{s}oara\\
Bld. V. P\^{a}rvan, no. 4, 300223 Timi\c{s}oara, Rom\^{a}nia\\
adara@math.uvt.ro\\
\medskip

\noindent
Cristian Ida\\
Department of Mathematics and Computer Science\\
University Transilvania of Bra\c{s}ov\\
Address: Bra\c{s}ov 500091, Str. Iuliu Maniu 50, Rom\^{a}nia\\
email:\textit{cristian.ida@unitbv.ro}


\begin{thebibliography}{Nov}

\bibitem{ab} M. Abouzaid, M. Boyarchenko, {\it Local structure of generalized complex manifolds},
2006, J. Sympl. Geom., 4 (1), 2006, 43--62.

\bibitem{bcg1} H. Bursztyn, G. R. Cavalcanti, M. Gualtieri, {\it Generalized K\"{a}hler and hyper-K\"{a}hler quotients},
2007, arXiv:math/0702104v1.

\bibitem{bcg2} H. Bursztyn, G. R. Cavalcanti, M. Gualtieri,
{\it Reduction of Courant algebroids and generalized complex
structures}, Adv. Math., 211 (2), 2007, 726--765.

\bibitem{cr} M. Crainic, {\it Generalized complex structures and Lie brackets}, arXiv:math/0412097v2, 2004.

\bibitem{gu} M. Gualtieri, {\it Generalized complex geometry}, PhD
Thesis, 2004, arXiv:math/0401221v1.

\bibitem{hit} N. Hitchin, {\it Lectures on special Lagrangian submanifolds}, Studies in Advanced
Mathematics, 23, Americal Mathematical Society, Providence, 2001.

\bibitem{hi} N. Hitchin, {\it Generalized Calabi-Yau manifolds}, Q. J. Math.,
54, 2003, 281--308.

\bibitem{ho} N. Houston, {\it Supergravity and Generalised Geometry}, Theoretical Physics Department,
Imperial College London, 2010.

\bibitem{orpa} L. Ornea, R. Pantilie, {\it On holomorphic maps and Generalized Complex Geometry},
J. Geom. Phys., 61, 2011, 1502--1515.

\bibitem{or} L. Ornea, R. Pantilie, {\it On the local structure of generalized K\"{a}hler manifolds},
Bull. Math. Soc. Sci. Math. Roumanie (N.S.), 52 (100), no. 3, 2009,
347--353.

\bibitem{po}Y. S. Poon, A. Wade, {\it Generalized Contact
Structures}, 2009, arXiv:math/0912.5314v1.

\bibitem{Sah} B. \c{S}ahin, F. \c{S}ahin, {\it Generalized almost para-contact manifolds}, available to arXiv:1401.5304v1. math. GT.

\bibitem{sa} I. Sato, {\it On a structure similar to almost contact structure},
Tensor N. S., 30, 1976, 219--224.

\bibitem{va} I. Vaisman, {\it From generalized Kaehler to generalized Sasakian
structures}, Journal of Geometry and Symmetry in Physics, 18, 2010,
63--86.

\bibitem{zu} R. Zucchini, {\it A sigma model field theoretic realization of Hitchin's
generalized complex geometry}, J. High Energy Phys., no. 11, 2005.
\end{thebibliography}
\end{document}